\newtheorem{theorem}{Theorem}[section]
\newtheorem{Proposition}[theorem]{Proposition}
\newtheorem{Lemma}[theorem]{Lemma}
\theoremstyle{definition}
\newtheorem{example}[theorem]{Example}
\newtheorem{construction}[theorem]{Construction}
\theoremstyle{remark}
\newtheorem*{remark}{Remark}
\numberwithin{equation}{section}
\def\Z{\mathbb Z}
\newcommand{\rzk}{\mathcal R_{\mathcal K}}
\def\sK{\mathcal K}
\def\sK{\mathcal K}
\def\phi{\varphi}
\newcommand{\mb}[1]{{\textbf {\textit#1}}}
\newcommand{\bl}{\lambda\kern-0.53em\lambda}
\newcommand{\bmu}{\mu\kern-0.55em\mu}
\newcommand{\bnu}{\nu\kern-0.51em\nu}
\def\lim{\mathop\mathrm{lim}\nolimits}
\newcommand{\zk}{\mathcal Z_{\mathcal K}}
\begin{document}

\title[Free commuting involutions
	 \\ on closed two-dimensional surfaces]{Free commuting involutions on closed two-dimensional surfaces}
\author{Tatiana Neretina}
\email{nertata2@yandex.ru}
\address{Lomonosov Moscow State University}

\maketitle

\begin{abstract}
	We consider the function $f(g)$ that assigns to an orientable surface $M$ of genus $g$ the maximal number of free commuting independent involutions on $M$. We show that the surface of minimal genus $g$ with $f(g)=n$ is a real moment-angle complex $\rzk$, where $\sK$ is the boundary of an $(n+2)$-gon. The genus is given by the formula $g = 1 + 2^{n-1}(n-2)$.
\end{abstract}

\section{Introduction}

One of the main objects of study in toric topology is the moment-angle-complex~$ \mathcal Z_{\sK} $,  which is a cell complex with a torus action constructed  from a simplicial complex $\sK$, see~\cite{bu-pa15}. Along with the moment-angle complex $\zk$, its real analog $\rzk$ is considered and has many interesting properties. If $\sK$ is a simplicial subdivision of an $(n-1)$-dimensional sphere with $m$ vertices, then $\zk$ is an $(m+n)$-dimensional (closed) manifold, and $\rzk$ is an $n$-dimensional manifold. In particular, if $\sK$ is the boundary of an $m$-gon, then $ \rzk $ is an orientable closed two-dimensional manifold (a surface). Given a positive integer~$g$, let $f(g)$ be the maximal number of free commuting independent involutions on a closed orientable surface $M_g$ of genus $g$ (that is, $f(g)$ is the largest possible $ n $ such that $ (\Z_2)^n$ acts on $M_g$ freely). In this paper, it is proved that the function $f(g)$ attains a local maximum on the surfaces $\rzk$ see~Fig.~1.
In other words, for any integer $n$, the surface of minimum genus $g$ that supports a free action of  $\Z_2^n$ has the form $\rzk$, where $\sK$ is an $(n+2)$-gon. The genus is given by the formula $g = 1 + 2^{n-1}(n-2)$.

The author is grateful to her supervisor Taras Panov for suggesting the problem and attention to this work.

\section{Basic notions and preliminary statements}

A moment-angle complex is a special case of the following construction:

\begin{construction}[polyhedral product]\label{nsc}
Let $\sK$ be a simplicial complex on the set 
 $[m]={1, 2, \ldots, m}$ and let
$$
  (\mb X,\mb A)= \{(X_1,A_1),\ldots,(X_m,A_m)\}
$$
be a collection of $m$ pairs of topological spaces, $A_i\subset X_i$.
For each subset $I\subset[m]$ denote
\begin{equation*}
  (\mb X,\mb A)^I=\bigl\{(x_1,\ldots,x_m)\in
  \prod_{j=1}^m X_j\colon\; x_j\in A_j\quad\text{for }j\notin I\bigl\}
\end{equation*}
and define the \textbf{polyhedral product} of $(\mb X,\mb A)$ corresponding to a simplicial complex $\sK$ by
\[
  (\mb X,\mb A)^{\sK}=\bigcup_{I\in\mathcal K}(\mb X,\mb A)^I=
  \bigcup_{I\in\mathcal K}
  \Bigl(\prod_{i\in I}X_i\times\prod_{i\notin I}A_i\Bigl).\notag
\]
Here the union is a subset of $\prod_{j=1}^m X_j$.

In the case when all pairs $(X_i,A_i)$ are the same, i.e.
$X_i=X$ and $A_i=A$ for $i=1,\ldots,m$, we use the notation
$(X,A)^\sK$ for $(\mb X,\mb A)^\sK$.
\end{construction}
If $(X, A) = (D^2, S^1)$ then $\mathcal Z_{\sK} = (D^2, S^1)^\sK$ is called the \emph{moment-angle complex}. We will consider its real analogue:
\begin{equation*}
  \rzk = ([-1, 1], \{-1, 1\})^{\sK},
\end{equation*}
where $[-1, 1] = D^1$~ is the a line segment, and $\{-1, 1\} = \partial[-1, 1]$~ is the pair of points.

If $\sK$~ is a simplicial decomposition of $(n-1)$-dimensional sphere, then $\rzk$ is an $n$-dimensional manifold  with the action of group $(\Z_2)^m$ (see \cite[Theorem~4.1.7]{bu-pa15}).
In other words, we have $m$ commuting involutions on~$\rzk$. Of these $m$ involutions no more than  $m-n$ act freely.
For completeness, below we present a proof of these facts for $\sK$ being the boundary of an $m$-gon. The surfaces $\rzk$ corresponding to $m$-gons appear as ``regular topological skew polyhedra'' in the 1937 work of Coxeter ~\cite{coxe37}.
\begin{Proposition}\label{g}
Let $\sK$~ be the boundary of an $m$-gon. Then $\rzk$~ is an orientable closed surface of genus
$$g = 1+2^{m-3}(m-4). $$
\end{Proposition}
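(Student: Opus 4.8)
The plan is to put a cubical CW structure on $\rzk$, read off the numbers of cells, verify by hand that it is a closed orientable surface, and then recover the genus from the Euler characteristic. First I would describe the cells. Since $\rzk\subset[-1,1]^m$, give the cube $[-1,1]^m$ its standard cell structure in which a cell is recorded by a vector in $\{+,-,*\}^m$, the symbol $*$ marking the coordinates that range over the open interval $(-1,1)$; the dimension of such a cell equals the number of $*$'s. A cell with $*$-set $J\subset[m]$ lies in $\rzk$ exactly when $J\in\sK$, because $J\subseteq I$ with $I\in\sK$ forces $J\in\sK$. Hence the $d$-cells of $\rzk$ are in bijection with pairs $(J,\epsilon)$, where $J\in\sK$ has $|J|=d$ and $\epsilon\in\{-1,1\}^{[m]\setminus J}$ is a choice of signs on the remaining coordinates. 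For $\sK=\partial(\text{$m$-gon})$ the simplices are the empty set, the $m$ vertices and the $m$ edges $\{i,i+1\}$, so the cell counts are $V=2^m$, $E=m\cdot 2^{m-1}$ and $F=m\cdot 2^{m-2}$.

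Next I would show $\rzk$ is a connected closed $2$-manifold. Connectedness is immediate: the allowed single-coordinate flips (every singleton lies in $\sK$) make the $1$-skeleton contain the whole hypercube graph on $\{-1,1\}^m$, which is connected. For the manifold property I would compute the link of an arbitrary vertex $v\in\{-1,1\}^m$. The edges at $v$ are the cells $(\{i\},v|_{[m]\setminus\{i\}})$, one for each vertex $i$ of $\sK$, and the faces at $v$ are the cells $(\{i,i+1\},v|_{[m]\setminus\{i,i+1\}})$, one for each edge of $\sK$; the face indexed by $\{i,i+1\}$ joins the edges indexed by $i$ and $i+1$. Thus the link of $v$ is combinatorially isomorphic to $\sK$ itself, which is a circle, and in particular every edge of $\rzk$ lies in exactly two faces. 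This shows $\rzk$ is a closed surface. (This is the local form of the general criterion \cite[Theorem~4.1.7]{bu-pa15}.)

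The step I expect to be the crux is orientability, since an even Euler characteristic alone does not distinguish the orientable from the non-orientable case. My plan is to exhibit a coherent orientation of the square faces explicitly. Orient the face $\sigma=(\{i,i+1\},\epsilon)$ by $\eta_\sigma\,dx_i\wedge dx_{i+1}$, where I set $\eta_\sigma=\prod_{j\notin\{i,i+1\}}\epsilon_j$, the product of all the fixed sign-coordinates. Coherence must be checked across each interior edge: if $A=(\{k,k+1\},\cdot)$ and $B=(\{k-1,k\},\cdot)$ share the edge $(\{k\},\delta)$, then computing the two induced boundary orientations (via the outward-normal-first convention) shows compatibility is equivalent to the single relation $\eta_A=\eta_B\,\delta_{k-1}\delta_{k+1}$. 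With the above choice one has $\eta_A\eta_B=\prod_{j\neq k,k+1}\delta_j\cdot\prod_{j\neq k-1,k}\delta_j=\delta_{k-1}\delta_{k+1}$, since every coordinate other than $k-1$ and $k+1$ occurs an even number of times; this is exactly the required relation. Hence the chosen orientations glue coherently and $\rzk$ is orientable.

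Finally I would assemble the numerics. From the cell counts,
\[
  \chi(\rzk)=V-E+F=2^m-m\cdot 2^{m-1}+m\cdot 2^{m-2}=2^{m-2}(4-m).
\]
Since $\rzk$ is a connected closed orientable surface, $\chi=2-2g$, so
\[
  g=1-\tfrac12\chi(\rzk)=1+2^{m-3}(m-4),
\]
which is the asserted formula. As sanity checks this gives $g=0$ for $m=3$ (the sphere) and $g=1$ for $m=4$ (the torus), matching the join decomposition $\sK=S^0*S^0$ in the latter case.
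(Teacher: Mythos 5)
Your proof is correct and follows the same overall route as the paper: both realize $\rzk$ as the cubical subcomplex of $I^m=[-1,1]^m$ whose two-dimensional faces are those with free coordinates $\{i,i+1\}$, count $V=2^m$, $E=m\cdot 2^{m-1}$, $F=m\cdot 2^{m-2}$, and extract the genus from $\chi(\rzk)=2^{m-2}(4-m)$. The substantive difference is that you prove the points the paper only asserts: connectedness (via the hypercube graph sitting inside the $1$-skeleton), the manifold property (via the identification of each vertex link with $\sK$ itself, rather than just noting each edge lies in two squares and each vertex in $m$ squares), and above all orientability, where the paper simply states that the incidence data ``implies'' an orientable closed manifold. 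Your explicit coherent orientation $\eta_\sigma\,dx_i\wedge dx_{i+1}$ with $\eta_\sigma=\prod_{j\notin\{i,i+1\}}\epsilon_j$ checks out: across an edge $(\{k\},\delta)$ the outward-normal-first convention does reduce compatibility of the faces $A=(\{k,k+1\},\cdot)$ and $B=(\{k-1,k\},\cdot)$ to the relation $\eta_A=\eta_B\,\delta_{k-1}\delta_{k+1}$, and your parity computation $\eta_A\eta_B=\delta_{k-1}\delta_{k+1}$ verifies it (indices mod $m$ cause no collision for $m\geqslant 3$). Since an even Euler characteristic alone cannot exclude the non-orientable surface with $\chi=2^{m-2}(4-m)$, this orientability argument closes a genuine gap left implicit in the paper's proof rather than merely duplicating it; everything else, including the final arithmetic $g=1-\frac12\chi=1+2^{m-3}(m-4)$ and the sanity checks $m=3,4$, matches the paper.
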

\begin{proof}
Consider the cube $I^m = [-1, 1]^m$ with the standard structure of cubical complex. Its two-dimensional skeleton consists of faces of the form
\begin{multline*}
 \{(\varepsilon_1, \dots, \varepsilon_{i-1}, x_i, \varepsilon_{i+1},\dots ,\varepsilon_{j-1}, x_j, \varepsilon_{j+1}, \dots, \varepsilon_m)\colon x_i, x_j \in [-1, 1]\},\\
 {\text{where} \; \varepsilon_1, \dots, \varepsilon_m =\pm 1}.
\end{multline*}
By definition, $\rzk$ is a cubical subcomplex of $I^m$. Indeed,  $\rzk$ is a union of two-dimensional
faces of cube of the following form:
\begin{equation*}
 \{(\varepsilon_1, \dots, \varepsilon_{i-1}, x_i, x_{i+1}, \varepsilon_{i+2}, \dots, \varepsilon_m)\colon x_i, x_{i+1} \in [-1, 1]\},\  \text{where} \; \varepsilon_1, \dots, \varepsilon_m =\pm 1
\end{equation*}
(here and below we consider subscripts modulo  $m$, i.e., $m+1 \equiv 1$).
Each one-dimensional face
 $\{(\varepsilon_1, \dots, x_i, \dots, \varepsilon_m)\colon x_i \in [-1, 1]\}$ is the boundary of precisely two squares, namely $(x_i, x_{i+1}) \in [-1, 1]^m$ and $(x_{i-1}, x_i) \in [-1, 1]^m$.
 Moreover, the intersection of two squares is either empty, or a vertex, or a one-dimensional face (since this is valid for the whole cube and one-dimensional faces of $\rzk$ form the whole one-dimensional skeleton of the cube).
 Each vertex is contained in precisely $m$ squares. This implies that $\rzk$ is a closed two-dimensional orientable manifold.

The surface $\rzk$ is glued of $F = m2^{m-2}$ squares. Each square has $4$ edges, and each edge is contained in two $2$ squares. Therefore the number of edges is $ E = 4  F / 2 = 2  F$. All vertices of the cube
are vertices of our surface, therefore the number of vertices is $V = 2^m$.
Thus, the Euler characteristic is
\begin{equation}\label{eul}
 \chi(\rzk) = V - E + F = V - 2F + F = V - F = 2^{m-2}(4-m),
\end{equation}
which immedially implies the formula for genus of the surface.
\end{proof}

\begin{example}\label{eeee}
Let  $\sK$ be the boundary of a triangle. Then $\rzk$ is the boundary of a $3$-dimensional cube and therefore is homeomorphic to a sphere, i.e., the genus is~$0$.
\end{example}

\begin{Lemma}\label{sv}
If $\sK$ is the boundary of an  $m$-gon, then there ia a free action of $\Z_2^{m-2}$ on $\rzk$.
\end{Lemma}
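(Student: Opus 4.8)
The plan is to make the $(\Z_2)^m$-action explicit, compute all point stabilisers, and thereby reduce the existence of a free $\Z_2^{m-2}$-action to a purely linear-algebraic question over $\Z_2$. Write $t_1,\dots,t_m$ for the standard generators of $(\Z_2)^m$, where $t_i$ acts on $\rzk\subset[-1,1]^m$ by $x_i\mapsto-x_i$ and fixes the other coordinates, and identify $(\Z_2)^m$ with the $\Z_2$-vector space $\Z_2^m$ so that $t_i$ becomes the basis vector $e_i$. For a point $p=(x_1,\dots,x_m)\in\rzk$ the involution $t_i$ fixes $p$ exactly when $x_i=0$, so the stabiliser of $p$ is $\langle t_i : x_i=0\rangle$. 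By the definition of the polyhedral product the set $\{i:x_i\in(-1,1)\}$ is a simplex of $\sK$, hence so is $\{i:x_i=0\}$; conversely, putting $x_i=0$ for $i\in S$ and $x_i=1$ otherwise realises any simplex $S\in\sK$ as such a zero-set. Thus the subgroups occurring as point stabilisers are precisely the $\langle t_i : i\in S\rangle$ with $S\in\sK$.

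Consequently a subgroup $H\le\Z_2^m$ acts freely on $\rzk$ if and only if it meets every such stabiliser trivially, i.e. contains no nonzero vector supported on a simplex of $\sK$. For $\sK$ the boundary of an $m$-gon the simplices are the empty set, the singletons $\{i\}$ and the edges $\{i,i+1\}$ (indices mod $m$), so the vectors to be avoided are exactly
\[
  e_1,\dots,e_m \quad\text{and}\quad e_i+e_{i+1}\ \ (i=1,\dots,m).
\]
I would therefore look for a codimension-$2$ subspace $H=\ker\ell_1\cap\ker\ell_2$, with linearly independent functionals $\ell_1,\ell_2\colon\Z_2^m\to\Z_2$, such that each forbidden vector lies outside $H$, that is, fails at least one of the two defining equations.

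To organise the two functionals, encode them by the map $c\colon\{1,\dots,m\}\to\Z_2^2$, $c(i)=(\ell_1(e_i),\ell_2(e_i))$. Then $e_i\notin H$ for all $i$ says $c(i)\ne(0,0)$, and $e_i+e_{i+1}\notin H$ says $c(i)\ne c(i+1)$. Hence what is required is precisely a proper colouring of the cycle $C_m$ (vertices $1,\dots,m$, edges $\{i,i+1\}$) by the three nonzero elements of $\Z_2^2$. Since every cycle has chromatic number $2$ or $3$, such a colouring exists for all $m\ge 3$; any proper colouring is non-constant, which forces $\ell_1,\ell_2$ to be linearly independent, so $\dim H=m-2$. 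This $H\cong\Z_2^{m-2}$ then acts freely, which proves the lemma.

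The one genuinely delicate point, and the main obstacle, is the parity behaviour. The most naive choice — total parity $\ell_1(v)=\sum_i v_i$ together with the alternating parity $\ell_2(v)=\sum_{i\ \mathrm{odd}}v_i$ — excludes every forbidden vector only for even $m$, because the alternation $c(i+1)=c(i)+(0,1)$ closes up around the cycle only when $m$ is even. Recasting the requirement as a $3$-colouring of $C_m$ is exactly what removes this obstruction and handles odd and even $m$ uniformly; for instance, at $m=3$ the only allowed colouring uses all three nonzero colours and yields $H=\langle t_1t_2t_3\rangle$, the antipodal (free) involution on $\rzk\cong S^2$. The remaining checks — that a proper colouring is non-constant and hence $\ell_1,\ell_2$ are independent — are immediate.
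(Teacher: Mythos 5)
Your proof is correct, but it takes a genuinely different route from the paper's. The paper argues by exhibiting $m-2$ explicit generators: for even $m=2k$ the diagonal compositions $\psi_1\psi_3,\psi_3\psi_5,\dots,\psi_{2k-3}\psi_{2k-1}$ and $\psi_2\psi_4,\dots,\psi_{2k-2}\psi_{2k}$, and for odd $m=2k+1$ the same list with the extra generator $\psi_1\psi_{2k}\psi_{2k+1}$; freeness is checked by noting that the fixed-point set $\{x_i=x_j=0\}$ of $\psi_i\psi_j$ with $|i-j|>1$ is disjoint from $\rzk$. You instead dualize: you first characterize exactly which subgroups $H\le\Z_2^m$ act freely (those containing no nonzero vector supported on a simplex of $\sK$), then cut $H$ out as $\ker\ell_1\cap\ker\ell_2$ and translate the avoidance conditions into a proper colouring of the cycle $C_m$ by the three nonzero elements of $\Z_2^2$, which exists for every $m\ge 3$ since cycles are at most $3$-chromatic. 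This buys two things. First, uniformity: the paper needs separate even and odd cases, and your observation that the naive choice (total parity and alternating parity) works only for even $m$ is precisely the paper's even-case construction seen in dual form, with the $3$-colouring absorbing the odd-cycle obstruction that the paper patches by hand. Second, completeness: your stabiliser computation verifies freeness for \emph{every} nontrivial element of $H$ simultaneously, whereas the paper explicitly checks only the products $\psi_i\psi_j$ and leaves implicit the (easy but necessary) verification that all other nontrivial elements of the generated subgroup also act freely. What the paper's concrete generators buy in return is visibility of finer structure: the Remark following the lemma reads off orientation behaviour directly from the parity of the number of $\psi_i$'s in each generator, which your kernel description yields only after asking whether the total-parity functional lies in the span of $\ell_1,\ell_2$. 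One step of yours worth a clarifying clause: non-constancy alone does not imply independence of $\ell_1,\ell_2$; rather, any dependence relation confines all colours $c(i)$ to a one-dimensional subspace of $\Z_2^2$, and since $(0,0)$ is excluded this forces the colouring to be constant, contradicting properness --- your argument uses both hypotheses, so it is correct, just compressed.
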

\begin{proof}
We describe a freely acting subgroup $\Z_2^{m-2} \subset \Z_2^m$ by explicitly defining its generators. Let  $\psi_i$ be the involution sending $x_i$ to $-x_i$ and fixing the other coordinates. The involutions  $\psi_i$, $i = 1, \ldots, m$, commute and therefore generate a $\Z_2^m$-action. However, this action is not free as $\psi_i$ fixes the points whose $i$-th coordinate is zero. Consider the composition $\psi_i \circ \psi_j$, where $|i-j| > 1$ (it corresponds to a diagonal in the polygon  $\sK$). The  set of fixed points of the involution $\psi_i \circ \psi_j$ is the set with coordinates  $x_i = x_j = 0$,  but $\rzk$ does not contain such points, since  $x_i, x_j$ are not consecutive. If $m = 2k$, then consider  the $m-2$ involutions $\psi_1 \circ \psi_3$, $\psi_3 \circ \psi_5$, $\dots$, $\psi_{2k-3} \circ \psi_{2k-1}$ and $\psi_2 \circ \psi_4$, $\psi_4 \circ \psi_6$, $\dots$, $\psi_{2k-2} \circ \psi_{2k}$.
These involutions  commute pairwise and generate a free action of $\Z_2^{m-2}$. Similarly, for an odd  $m = 2k+1$ consider  the $m-2$ involutions $\psi_1 \circ \psi_3$, $\psi_3 \circ \psi_5$, $\dots$, $\psi_{2k-3} \circ \psi_{2k-1}$, $\psi_2 \circ \psi_4$, $\psi_4 \circ \psi_6$, $\dots$, $\psi_{2k-2} \circ \psi_{2k}$ and $\psi_1 \circ \psi_{2k} \circ \psi_{2k+1}$.
\end{proof}

\begin{remark}
For even $m$, each element of the freely acting group $\Z_2^{m-2}$ defined above preserves the orientation of $\rzk$, since it is a composition
of an even number of elementary involutions $\psi_i$. For odd $m$, the involution $\psi_1 \circ \psi_{2k} \circ \psi_{2k+1}$ reverses the orientation of~$\rzk$.
\end{remark}

Next, consider an arbitrary closed two-dimensional surface $M$. We ask the following question: find the maximal  $n$ such that there is a free action of $\Z_2^n$ on~$M$. Obviously, $n$ depends only on the genus  $h$ of the surface $M$.
(The Euler characteristic of an orientable surface of genus  $h$ is $2-2h$, and the Euler characteristic of a nonorientable surface of genus  $h$ is $2-h$.) Define the function 
$$
f(h) = \max\{n \colon \text{there is an action of $\Z_2^n$  on a surface of genus } h\}.
$$
Let $f(h) = n$. Then $B = M/\Z_2^n$ is a closed two-dimensional manifold with Euler characteristic 
$\chi(B) = \chi(M)/ 2^n$.

\begin{Proposition}\label{prop}
Let $g$ be the genus of  $B= M/\Z_2^n$. If the surface  $B$ is orientable, then  $n \leqslant 2g$. If $B$ is nonorientable, then  $n \leqslant g$.
\end{Proposition}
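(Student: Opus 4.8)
The plan is to convert the hypothesis of a free $\Z_2^n$-action into a surjection of fundamental groups and then linearize it over the field $\Z_2$. Since $M$ is a (connected) closed surface on which $\Z_2^n$ acts freely, the quotient projection $p\colon M \to B = M/\Z_2^n$ is a regular $2^n$-fold covering whose group of deck transformations is $\Z_2^n$. First I would invoke the standard short exact sequence of a regular covering, $1 \to \pi_1(M) \to \pi_1(B) \to \Z_2^n \to 1$, which in particular supplies a surjective homomorphism $\phi\colon \pi_1(B) \twoheadrightarrow \Z_2^n$.

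Next I would linearize $\phi$. As $\Z_2^n$ is abelian, $\phi$ factors through the abelianization $H_1(B;\Z)$; and as every element of $\Z_2^n$ has order dividing $2$, the resulting map annihilates $2\,H_1(B;\Z)$ and hence descends to a surjection of $\Z_2$-vector spaces $H_1(B;\Z)\otimes\Z_2 \twoheadrightarrow \Z_2^n$. By the universal coefficient theorem $H_1(B;\Z)\otimes\Z_2 \cong H_1(B;\Z_2)$, the $\Tor$ term vanishing because $H_0(B;\Z)$ is free. Comparing dimensions over $\Z_2$ then yields $\dim_{\Z_2} H_1(B;\Z_2) \ge n$.

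Finally I would substitute the known $\Z_2$-homology of closed surfaces. If $B$ is orientable of genus $g$, then $\dim_{\Z_2} H_1(B;\Z_2) = 2g$, so $n \le 2g$; if $B$ is nonorientable of genus $g$, then $\dim_{\Z_2} H_1(B;\Z_2) = g$, so $n \le g$, which is precisely the assertion.

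I expect the only genuine point requiring care to be the passage from the group surjection $\phi$ to a surjection emanating from $H_1(B;\Z_2)$: one must verify that factoring through the abelianization and then reducing modulo $2$ keeps the map surjective (it does, being a composition of quotient maps) and correctly identify $H_1(B;\Z)\otimes\Z_2$ with $H_1(B;\Z_2)$. The remaining ingredients --- the exact sequence of a regular cover, which relies on $M$ being connected (implicit in the word ``surface''), and the standard values of the first $\Z_2$-Betti number of orientable and nonorientable surfaces --- are routine. Note also that a purely Euler-characteristic argument cannot suffice here, since $\chi(B)=\chi(M)/2^n$ fixes $\chi(M)$ but places no upper bound on $n$; the homological rank estimate is essential.
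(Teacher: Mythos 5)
Your proposal is correct and follows essentially the same route as the paper: both extract from the free action the surjection $\pi_1(B)\twoheadrightarrow \Z_2^n$ and then bound $n$ by the rank of the largest elementary abelian $2$-group quotient of $\pi_1(B)$. The only difference is cosmetic --- the paper counts words in the $2g$ (resp.\ $g$) commuting, square-trivial generator cosets to get $|G/H|\leqslant 2^{2g}$ (resp.\ $2^g$), while you name this same bound as $\dim_{\Z_2}H_1(B;\Z_2)$ via abelianization, reduction mod $2$, and the universal coefficient theorem.
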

\begin{proof}
Consider the case of orientable $B$. The fundamental group of $B$ is
$$
G = \pi_1(B)=  \langle a_1, b_1, \dots, a_g, b_g \; | \; a_1b_1a_1^{-1}b_1^{-1} \dots a_gb_ga_g^{-1}b_g^{-1} = 1\rangle.
$$
The fundamental group $\pi_1(M) = H$ of the manifold $M$ is a normal subgroup of the group $G$ (since the action of  $\Z_2^n$ is free) and
$$
G/H \thickapprox \Z_2^n.
$$
Therefore the square of each coset is 1,
$$
[a_i]^2 = [b_j]^2 = 1,
$$
for all $i, j = 1,..., g$. Moreover, all cosets (i.e., elements of $G/H \thickapprox \Z_2^n$) commute. The quotient group $G/H$ is generated by the cosets  $[a_1], [b_1], \dots, [a_g], [b_g]$. Since they commute and their squares equal the unit, we can compose at most  $2^{2g}$ words of them. Therefore the order of the group $G/H\thickapprox  \Z_2^n$ is at most $ 2^{2g}$. Hence, $n \leqslant 2g$.
For a nonorientable surface $B$ the argument is similar: the fundamental group
$$
G = \pi_1(B)=  \langle a_1, \dots, a_g | \; a_1^2 \dots a_g^{2} = 1\rangle
$$
has $g$ generators and the order of the quotient $G/H$ is at most $ 2^g$.
\end{proof}

\begin{Proposition}\label{prop2}\ \begin{itemize}
                       \item[a)] For any orientable surface $B$  of genus $g$ and an integer $n \leqslant 2g$, there exists an orientable surface $M$ with a free action of $\Z_2^n$ such that $B= M/\Z_2^n$.
                       \item[b)] For any nonorientable surface $B$ of genus $g$ and an integer $n \leqslant g$, there exists a surface  $M$ with a free action of $\Z_2^n$ such that $B= M/\Z_2^n$.
                     \end{itemize}
\end{Proposition}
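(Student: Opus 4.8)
The plan is to recast the statement through covering space theory. A free action of $\Z_2^n$ on a connected closed surface $M$ with quotient $B$ is the same data as a regular covering $p\colon M\to B$ whose group of deck transformations is $\Z_2^n$, and by the Galois correspondence for coverings such coverings are classified by surjective homomorphisms $\phi\colon\pi_1(B)\to\Z_2^n$. So I would argue as follows. Given such a $\phi$, let $M$ be the covering of $B$ associated with the subgroup $\ker\phi\subseteq\pi_1(B)$. Then $M$ is connected (it is the cover of an actual subgroup) and closed (a finite cover of a closed surface is a closed surface, since $p$ is a finite-sheeted local homeomorphism); the subgroup $\ker\phi$ is normal, being a kernel, so the covering is regular; surjectivity of $\phi$ makes its group of deck transformations $\pi_1(B)/\ker\phi\cong\Z_2^n$; this group acts freely because deck transformations of a covering have no fixed points; and the quotient is $M/\Z_2^n=B$. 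Thus both parts reduce to exhibiting a surjection $\pi_1(B)\to\Z_2^n$.

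For part (a), recall that $\pi_1(B)=\langle a_1,b_1,\dots,a_g,b_g \mid \prod_i[a_i,b_i]=1\rangle$. Since $\Z_2^n$ is abelian, every commutator is sent to the identity, so any assignment of the generators $a_i,b_i$ to elements of $\Z_2^n$ extends to a homomorphism; equivalently $\phi$ factors through $H_1(B;\Z_2)\cong\Z_2^{2g}$. As $n\le 2g$, I may choose the images of the $2g$ generators so that they contain a basis of $\Z_2^n$ (for instance, compose the abelianization with a coordinate projection $\Z_2^{2g}\to\Z_2^n$), which makes $\phi$ surjective. Because $B$ is orientable and a covering of an orientable manifold inherits an orientation by pullback along the local homeomorphism $p$, the surface $M$ produced above is orientable, as required.

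For part (b), $\pi_1(B)=\langle a_1,\dots,a_g \mid a_1^2\cdots a_g^2=1\rangle$. In the elementary abelian $2$-group $\Z_2^n$ every element squares to the identity, so the single relator $a_1^2\cdots a_g^2$ is automatically sent to the identity under any assignment of the $a_i$; hence again every map of generators extends to a homomorphism. Sending $a_1,\dots,a_n$ to a basis of $\Z_2^n$ and $a_{n+1},\dots,a_g$ to the identity, which is legitimate since $n\le g$, gives a surjection $\phi$, and the associated cover $M$ carries the desired free $\Z_2^n$-action with quotient $B$.

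I do not expect a serious obstacle here: once the problem is phrased via the Galois correspondence, the only content is the elementary fact that an orientable surface of genus $g$ has $\Z_2$-homology of rank $2g$ and a nonorientable one of rank $g$, which is exactly the range of $n$ for which a surjection onto $\Z_2^n$ exists and which matches the upper bounds of Proposition~\ref{prop}. The points that merit a word of care are purely bookkeeping: that the cover attached to $\ker\phi$ is connected and regular, that surjectivity of $\phi$ pins the deck group down to the full $\Z_2^n$ (so the quotient is exactly $B$ and the action is free), and, in part (a), that orientability is inherited by $M$.
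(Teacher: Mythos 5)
Your proof is correct, and it is essentially the dual formulation of the paper's argument, packaged more cleanly. The paper works on the subgroup side of the Galois correspondence: it takes the normal closure $H$ of the subgroup generated by all squares in $G=\pi_1(B)$, observes $G/H\cong\Z_2^{2g}$, and then descends one step at a time, enlarging $H$ by one generator ($a_1$, then the others) to produce a chain of normal subgroups $H_k$ with $G/H_k\cong\Z_2^{2g-k}$, each step requiring a separate check that $H_k$ is proper (via parity-of-letters arguments that are stated rather loosely, e.g.\ ``$b_g$ occurs an even number of times''). You work on the quotient side: you build the surjection $\phi\colon\pi_1(B)\to\Z_2^n$ directly by factoring through $H_1(B;\Z_2)\cong\Z_2^{2g}$ (resp.\ $\Z_2^{g}$) and projecting, and take $M$ to be the cover attached to $\ker\phi$. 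The kernels you obtain are exactly the paper's $H_k$, so the mathematical content coincides, but your route buys several things: the deck group is $\Z_2^n$ by construction, with no need to verify properness or compute quotients of nested normal closures; you reach any $n\leqslant 2g$ in one step instead of a descending chain; you explicitly check that $M$ in part a) is orientable (the paper asserts orientability in the statement but never verifies it, though it follows just as you say, by pulling back the orientation along the covering); and you actually carry out part b), including the key observation that the relator $a_1^2\cdots a_g^2$ dies automatically in an elementary abelian $2$-group, where the paper only says the nonorientable case is ``considered similarly.'' Your bookkeeping points (connectedness of the cover, regularity, freeness of deck transformations) are all handled correctly.
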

\begin{proof}
Let $B$ be orientable. Consider the subgroup $P$ in the fundamental group  $\pi(B)=G$ generated by the squares of all elements
$$
   P = \langle g^2, g \in G\rangle,
$$
and consider its normalizer subgroup $H = GPG^{-1}$. Then $H$ is a normal subgroup and $H\neq G$, since $H$ contains only elements with even number of letters. The relations  $[a_i]^2 = [b_j]^2 = [a_i b_j a_i b_j] = 1$ in the quotient group $G/H$ imply that $G/H \cong \Z_2^{2g}$. Hence there exists a regular covering of $B$, which gives a free action of $G/H \cong \Z_2^{2g}$  (see \cite{hatc02}). Now add one generator to $P$: $P_1 = \langle P, a_1 \rangle $ and consider  $H_1 = GP_1G^{-1}$. Then $H_1$ is a proper normal subgroup of $G$ (since in each of its elements the generator $b_g$ occurs even number of times), and the quotient group is  $G/H_1 \cong \Z_2^{2g-1}$. The corresponding covering of $B$ is regular and gives a free action of $\Z_2^{2g-1}$. Continuing this process, we  add to $P$ the other generators $a_i, b_i$, and obtain regular coverings of $B$ corresponding to free actions of $\Z_2^{2g-k}$ for all $k = 1, 2,..., 2g-1$.

The nonorientable case is considered similarly.
\end{proof}

\section{Main results}
Let $M_g$ be a surface of genus $g$.
\begin{Proposition}\label{3}
Let $n$ be the maximal integer such that
\begin{equation}\label{nmax}
\chi(M_g) = a \cdot 2^n \qquad \text{with } \; n \leqslant 2 - a,\; a \leqslant 1.
\end{equation}
Then $f(g) = n$ if $a$ is even, and $n-1 \leqslant f(g) \leqslant n$ if $a$ is odd.
\end{Proposition}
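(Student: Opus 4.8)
The plan is to establish $f(g)\le n$ in full generality and then the matching lower bounds, reducing everything to Proposition~\ref{prop} (the estimate $N\le 2-\chi(\cdot)$ for a quotient) and to Proposition~\ref{prop2} (the realization of a prescribed quotient), together with the fact that a closed orientable surface is determined up to homeomorphism by its Euler characteristic. Write $\chi=\chi(M_g)$, so that by hypothesis $\chi=a\cdot2^n$ with $a\le1$, $n\le 2-a$, and $n$ maximal with these properties. I assume $g\ge1$, so that $\chi\le0$; the sphere is trivial.

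For the upper bound, suppose $\Z_2^N$ acts freely on $M_g$ with $N\ge1$ and set $B=M_g/\Z_2^N$, a closed surface with $\chi(B)=\chi/2^N$. Proposition~\ref{prop} gives $N\le 2-\chi(B)$ in both the orientable and the nonorientable case, and in particular $\chi(B)=\chi/2^N$ is an integer with $\chi(B)\le 2-N\le1$. Thus the pair $(N,\chi/2^N)$ meets the two conditions $a\le1$ and $n\le 2-a$ defining $n$; by maximality $N\le n$, so $f(g)\le n$ (the case $N=0$ being trivial, since $\chi\le0$ forces $n\ge0$).

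When $a$ is even we have $a\le0$; let $B_0$ be the orientable surface of genus $h_0=1-\tfrac a2\ge1$, so $\chi(B_0)=a$. The hypothesis $n\le 2-a=2h_0$ is exactly what Proposition~\ref{prop2}(a) requires, and it produces an orientable surface $M'$ with a free $\Z_2^n$-action and $M'/\Z_2^n=B_0$, whence $\chi(M')=2^n a=\chi$. Since $M'$ is orientable with the same Euler characteristic as the orientable surface $M_g$, we get $M'\cong M_g$ and therefore $f(g)\ge n$. Together with the upper bound this proves $f(g)=n$.

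The case $a$ odd is where the real difficulty sits, and it is the reason the statement gives only a range. Now $\chi(B)=a$ would be odd, forcing any base surface of that Euler characteristic to be nonorientable; invoking Proposition~\ref{prop2}(b) produces a total space whose orientability is not controlled, so one cannot conclude it is homeomorphic to the orientable $M_g$. To obtain the guaranteed part $f(g)\ge n-1$ I instead pass to the orientable base $B_1$ of genus $h_1=1-a$, with $\chi(B_1)=2a$; from $n\le 2-a$ one gets $n-1\le 1-a\le 2(1-a)=2h_1$, so Proposition~\ref{prop2}(a) gives an orientable surface with a free $\Z_2^{n-1}$-action and Euler characteristic $2^{n-1}\cdot2a=\chi$, again homeomorphic to $M_g$. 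Hence $n-1\le f(g)\le n$. Whether the upper value $n$ is actually attained amounts to deciding if $M_g$ carries a free $\Z_2^n$-action with nonorientable quotient; this is left open here and, in the cases relevant to the main theorem, is resolved in the affirmative by the explicit involutions of Lemma~\ref{sv}.
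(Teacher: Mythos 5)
Your proposal is correct and follows essentially the same route as the paper: the upper bound via Proposition~\ref{prop} applied to the quotient together with the maximality of $n$, and the lower bounds via Proposition~\ref{prop2}(a) applied to the orientable base of Euler characteristic $a$ (even case) or $2a$ (odd case), identifying the resulting cover with $M_g$ by its Euler characteristic and orientability. You are in fact somewhat more careful than the paper (checking $\chi(B)\leqslant 1$, the $N=0$ case, and the orientability obstruction explaining why the odd case only yields a range), but these are refinements of the same argument, not a different one.
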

\begin{proof}
Let $\Z^k_2$ act freely on a surface $M_g$. Then $\chi(M_g)=a' \cdot 2^k $ where $a' =\chi(M_g/~\Z^k_2)$, and $k \leqslant 2 - a'$ by Proposition \ref{prop}. By definition, $n$ is the maximum of such $k$, hence $f(g)\leqslant n$. Now we estimate $f(g)$ from below. Consider the two cases:

\smallskip

\noindent    \emph{Case 1: $a$ is even.}
  Consider an orientable surface $B$ with Euler characteristic  $a$ and genus $\frac{2-a}{2}$. Since $n \leqslant 2\frac{2-a}{2}$, by Proposition \ref{prop2} there exists an orientable surface  $M$ such that $\Z^n_2$ acts freely on $M$ and $M = B/\Z^n_2$. The Euler characteristic is  $\chi(M) = a \cdot 2^n = \chi(M_g)$, therefore  $M$ has genus  $g$ and $f(g)\geqslant n$.
  
  \smallskip
  
 \noindent    \emph{Case 2: $a$ is odd.} Write $\chi(M) = 2a \cdot 2^{n-1}$. Consider an orientable surface $B$ of Euler characteristic $2a$, i.e., of genus $\frac{2-2a}{2}$. We have  $n-1 \leqslant 2 - 2a$, since $n \leqslant 2- a$ and $a \leqslant 1$. By Proposition \ref{prop2} there exists an orientable surface  $M$ such that $\Z^{n-1}_2$ acts freely on $M$ and $M = B/\Z^{n-1}_2$. The Euler characteristic is   $\chi(M) = 2a \cdot 2^{n-1} = \chi(M_g)$, therefore $M$ has genus $g$ and $f(g)\geqslant n-1$.
\end{proof}

%

Define $H(g)$ as the inverse function to  $g(x) = 1 + 2^{x-1} (x-2) $, that is,
$$
    H(g) = \frac{W\left(\frac{1}{2} (g-1) \ln2\right)}{\ln 2}+2,
$$
where $W$ denotes the Lambert function (i.e., the function inverse to  $x e^x$).
\begin{theorem}{Theorem}
  $f(g) \leqslant H(g)$, and  an equality is attained on real moment-angle manifolds and only on them.
\end{theorem}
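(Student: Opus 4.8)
The plan is to reduce the theorem to the two estimates already in hand — the upper bound of Proposition~\ref{prop} and the explicit free action of Lemma~\ref{sv} — after translating both into statements about the function $g(x)=1+2^{x-1}(x-2)$ and its inverse $H$. The whole argument is a short dictionary between the inequality $n\le 2-\chi(B)$ and the graph of $g$.

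First I would record the Euler-characteristic identity that makes the real moment-angle surfaces extremal. For the boundary of an $(n+2)$-gon, formula~\eqref{eul} gives $\chi(\rzk)=2^{n}(2-n)$, so the genus of $\rzk$ equals $g(n)=1+2^{n-1}(n-2)$, and at these surfaces the bound of Proposition~\ref{prop} is sharp. Since $g'(x)=2^{x-1}\bigl((x-2)\ln 2+1\bigr)>0$ for $x\ge 1$, the function $g$ is strictly increasing on $[1,\infty)$; hence $H=g^{-1}$ is a well-defined strictly increasing function with $H\bigl(g(n)\bigr)=n$ for every integer $n\ge 1$.

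Next I would prove $f(g)\le H(g)$. Suppose $\Z_2^{n}$ acts freely on the orientable surface $M_g$ and put $B=M_g/\Z_2^{n}$. By Proposition~\ref{prop}, whether $B$ is orientable or not one has $n\le 2-\chi(B)$, and since $\chi(B)=\chi(M_g)/2^{n}=(2-2g)/2^{n}$ this reads $2-2g\le 2^{n}(2-n)=2-2g(n)$, i.e.\ $g\ge g(n)$. Applying the increasing function $H$ gives $n\le H(g)$, and taking the maximum over all admissible $n$ yields $f(g)\le H(g)$.

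Finally I would settle the equality case. At $g=g(n)$ the surface $\rzk$ of the $(n+2)$-gon has genus $g(n)$ and, by Lemma~\ref{sv}, carries a free $\Z_2^{n}$-action, so $f\bigl(g(n)\bigr)\ge n$; combined with the bound just proved, $f\bigl(g(n)\bigr)=n=H\bigl(g(n)\bigr)$, so equality holds on every real moment-angle manifold (and since all orientable surfaces of a fixed genus are homeomorphic, ``on them'' is unambiguous). For the converse I would use that $f(g)$ is an integer while $H$ is strictly increasing with $H\bigl(g(n)\bigr)=n$: if $g$ is not of the form $g(n)$, then $H(g)$ is not an integer, hence $f(g)\le\lfloor H(g)\rfloor<H(g)$. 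Thus equality is attained precisely at the moment-angle genera. The one genuinely delicate point is this last dichotomy — one must know that $H$ takes an integer value only at the genera $g(n)$, which is exactly what strict monotonicity of $g$ guarantees; everything else is bookkeeping resting on Proposition~\ref{prop} and Lemma~\ref{sv}.
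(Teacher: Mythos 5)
Your proof is correct, and the upper bound follows the paper's route in essence: both rest on the inequality $n \leqslant 2-\chi(B)$ from Proposition~\ref{prop} (uniform over orientable and nonorientable quotients $B$), rewritten via $\chi(B)=(2-2g)/2^{n}$ as $g \geqslant g(n)$ and inverted through the increasing function $H$; the paper merely packages the same computation through Proposition~\ref{3} and the substitution $a=2-b$. Where you genuinely differ is the equality case, and there your version is the more complete one. The paper's proof asserts that equality is attained iff $g=1+2^{n-1}(n-2)$ and identifies $M$ with $\rzk$ by comparing with Proposition~\ref{g}, but it does not spell out the attainment direction; note that Proposition~\ref{3} by itself gives only $n-1\leqslant f(g(n))\leqslant n$ when $a=2-n$ is odd, so for odd $n$ the lower bound genuinely requires the explicit free $\Z_2^{n}$-action of Lemma~\ref{sv} on $\rzk$ for the $(n+2)$-gon --- exactly the lemma you invoke, which closes this parity gap cleanly. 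Likewise your ``only on them'' direction (strict monotonicity of $g$ on $[1,\infty)$ via $g'(x)=2^{x-1}\bigl((x-2)\ln 2+1\bigr)>0$, so $H(g)\in\Z$ forces $g=g(n)$, and otherwise $f(g)\leqslant\lfloor H(g)\rfloor<H(g)$ by integrality of $f$) makes rigorous what the paper leaves implicit, and your remark that closed orientable surfaces of equal genus are homeomorphic justifies the identification $M_g\cong\rzk$. The only cosmetic caveat: applying $H(g(n))=n$ presupposes $n\geqslant 1$, the range on which $g$ is monotone, but the case $n=0$ is vacuous since $H(g)\geqslant 1$ for every $g\geqslant 0$.
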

\begin{proof}
Substituting $\chi(M_g) = 2 - 2g$ and $a = 2-b$ in \eqref{nmax} we obtain
$$
g = 1 + 2^{n-1}(b - 2), \quad \text{where}\; n \leqslant b,\; b \geqslant1.
$$
By Proposition \ref{3}, for the maximal $n$ satisfying the conditions above we have $f(g) \leqslant n$. On the other hand we have
$$
g(n) = 1 + 2^{n-1}(n - 2)\leqslant 1 + 2^{n-1}(b - 2) = g,
$$
or equivalently $ n \leqslant H(g)$. This implies the required inequality $f(g) \leqslant H(g)$.
An equality is attained if and only if the genus $g$ can be written as
$
g = 1 + 2^{n-1}(n - 2).
$
Compairing this expession with the formula from Proposition \ref{g}, we obtain that  $M\cong \rzk$, where $\sK$ is the boundary of  $(n+2)$-gon.
\end{proof}
The values of the function $f(g)$ are shown  in Fig.~1, where the dashed line is the graph of $H(g)$.


\begin{figure}[h]
			\center{\includegraphics[width=128mm]{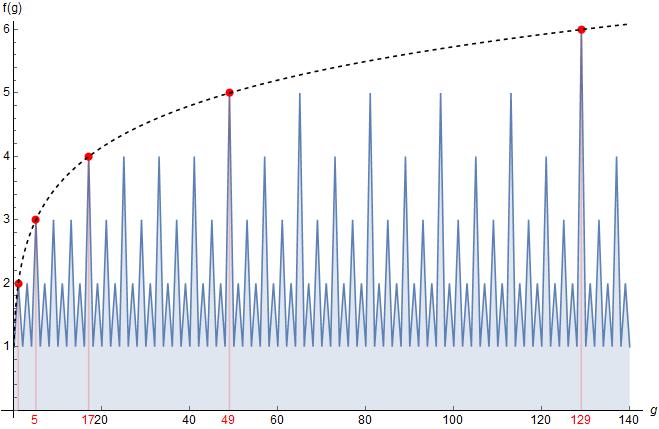}}
\caption{The graph of the function $f(g)$, where $g$ is  the genus of a surface.}
\end{figure}


\end{document}